\newcommand{\Z}{{\mathbb Z}}
\newcommand{\Q}{{\mathbb Q}}
\newcommand{\EEE}{\mathcal{E}}
\newcommand{\FFF}{\mathcal{F}}
\newcommand{\F}{\FFF^{\Z^2}}
\newcommand{\e}{\varepsilon}
\newcommand{\D}{\hat{D}}
\newtheorem{theo}{Theorem}
\newtheorem{prop}[theo]{Proposition}
\newtheorem{lemma}[theo]{Lemma}
\newenvironment{proofof}[1]{\noindent {\bf Proof of #1.}}{ \hfill\qed\\ }
\begin{document}

\title{Typical recurrence for the Ehrenfest wind-tree model}

\author{Serge Troubetzkoy}
\address{Institut de Mathématiques de Luminy, 163 avenue de Luminy, Case
907, 13288 Marseille Cedex 9, France}
\email{troubetz@iml.univ-mrs.fr}

\begin{abstract}
We show that the typical wind-tree model, in the sense of Baire,  is  recurrent and has a dense
set of periodic orbits.  The recurrence result also 
holds for the Lorentz gas : the typical  Lorentz gas, in the sense of Baire, is
recurrent.  These Lorentz gases need not be of finite horizon!
\end{abstract}

\maketitle \markboth{Serge Troubetzkoy}{Ehrenfest wind-tree model}

In 1912 Paul and Tatiana Ehrenfest proposed the wind-tree model of
diffusion in order to study the statistical interpretation of the
second law of thermodynamics and the applicability of the Boltzmann
equation \cite{EhEh}.  In the Ehrenfest wind-tree model, a point
particle (the ``wind'') moves freely on the plane and collides with the usual law
of geometric optics with randomly placed fixed square scatterers
(the ``trees'').  The notion of ``randomness'' was not made precise, in fact it would have
been impossible to do so before 
Kolmogorov laid the foundations of probability theory in the 1930s.
We will call the subset of the plane obtained by removing
the obstacles the billiard table, and the the motion of the point the billiard flow.

From the mathematical rigorous point of view, there have been two
results on recurrence for wind-tree models, both on a periodic version where 
the
scatterers are identical rectangular obstacles located periodically
along a square lattice on the plane, one obstacle centered at each
lattice point.  Hardy and Weber \cite{HaWe}
proved recurrence and abnormal
diffusion of the billiard flow for special dimensions of the obstacles
and for very special directions, using results on skew products above
rotations. More recently Hubert, Lelièvre, and Troubetzkoy have
studied the general full occupancy periodic case \cite{HuLeTr}.  They proved that, if the lengths of the sides
of the rectangles belong to a certain dense $G_{\delta}$ subset $\EEE'$, then the
dynamics is recurrent and they gave a lower bound on the diffusion rate.
The recurrence was proven by  analysis of the case when the lengths of the sides are rational
and belong to the set
\begin{align*}
\EEE & = 
\bigl\{\, (a,b) = (p/q,r/s) \in \Q \times \Q :
\\
& \phantom{={}}\quad
(p,q) = (r,s) =1,\quad 0<p<q,\quad 0<r<s,
\\[-3pt]
& \phantom{={}}\quad
p,r \textrm{ odd,}\quad q,s \textrm{ even}
\,\bigr\}.
\end{align*}
Another set of rationals lengths was used to prove the diffusion result. The set $\EEE'$ consists of 
 dimensions which are sufficiently well approximable by both of these rational sets. 

In this article we will prove the recurrence of  random wind-tree models.  We consider the following model.
Fix a finite or countable set of dimensions of obstacles $\FFF \subset (0,1)^2 \cup \{(0,0)\}$ such that  $\FFF \cap (\EEE \cup \EEE') \ne \emptyset$.  Let $e$ be in this intersection and  let $W_e$ denote the  billiard table  with identical obstacle $e$ at each lattice site.
Consider the set of all wind-tree models $\F$ with the product topology on $\F$.  
A lattice site with an obstacle of dimension $(0,0)$ will be interpreted as a lattice site without obstacle.

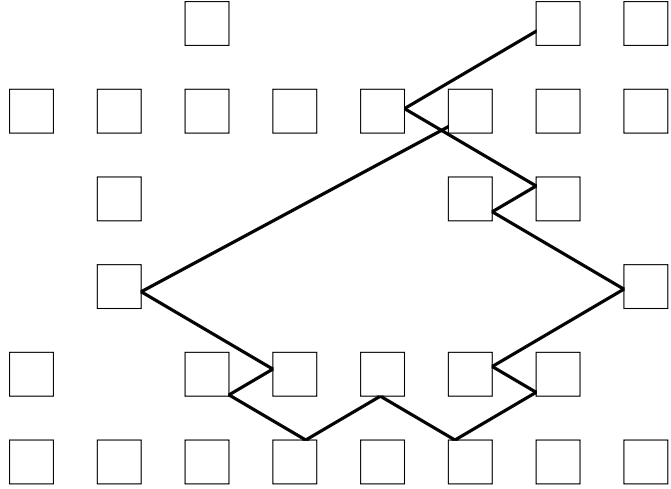
\begin{figure}[ht]
\begin{minipage}[ht]{0.58\linewidth}
\centering
\begin{tikzpicture}
  % windtree, a = b = 1/2, slope = 10/17
  [x=0.0972222222222cm,y=0.0571895424837cm,scale=0.6]

\draw[thin](20,0) rectangle +(10,17);
\draw[thin](140,0) rectangle +(10,17);
    
     \foreach \i in {0,20,...,140}
    \draw[thin] (\i,-68) rectangle +(10,17);
    
      \foreach \i in {0,20,...,140}
    \draw[thin] (\i,68) rectangle +(10,17);
    \draw[thin] (0,-34) rectangle +(10,17);
    \draw[thin] (20,34) rectangle +(10,17);
    \draw[thin] (40,-34) rectangle +(10,17);
    \draw[thin] (60,-34) rectangle +(10,17);
    \draw[thin] (80,-34) rectangle +(10,17);
    \draw[thin] (100,-34) rectangle +(10,17);
    \draw[thin] (120,-34) rectangle +(10,17);
    \draw[thin] (120,102) rectangle +(10,17);
    \draw[thin] (120,34) rectangle +(10,17);
     \draw[thin] (140,102) rectangle +(10,17);
   \draw[thin] (100,34) rectangle +(10,17);
    \draw[thin] (40,102) rectangle +(10,17);
   \draw[very thin,fill] (100,70) -- +(0,1) -- (30,7) -- +(0,-1) -- cycle;
  \draw[very thin,fill] (30,6) -- +(0,1) -- (60,-23) -- +(0,-1) -- cycle;
  \draw[very thin,fill] (50,-34) -- +(0,1) -- (68,-51) -- +(-1,0) -- cycle;
  \draw[very thin,fill] (84,-34) -- +(1,0) -- (102,-51) -- +(-1,0) -- cycle;
  \draw[very thin,fill] (120,-33) -- +(0,1) -- (110,-22) -- +(0,-1) -- cycle;
  \draw[very thin,fill] (140,7) -- +(0,1) -- (110,38) -- +(0,-1) -- cycle;
  \draw[very thin,fill] (120,47) -- +(0,1) -- (90,78) -- +(0,-1) -- cycle;

  \draw[very thin,fill] (60,-24) -- +(0,1) -- (50,-33) -- +(0,-1) -- cycle;
  \draw[very thin,fill] (67,-51) -- +(1,0) -- (85,-34) -- +(-1,0) -- cycle;
  \draw[very thin,fill] (101,-51) -- +(1,0) -- (120,-33) -- +(0,1) -- cycle;
  \draw[very thin,fill] (110,-23) -- +(0,1) -- (140,8) -- +(0,-1) -- cycle;
  \draw[very thin,fill] (110,37) -- +(0,1) -- (120,48) -- +(0,-1) -- cycle;
  \draw[very thin,fill] (90,77) -- +(0,1) -- (120,108) -- +(0,-1) -- cycle;

\end{tikzpicture}
\end{minipage}
\caption{A random wind-tree model with obstacles of sizes $(1/2,1/2)$ and $(0,0)$.}
\label{fig:escaping}
\end{figure}

Consider a flow $\Phi$ on a measured topological space $(\Omega,\mu)$. A point $x \in \Omega$ is called {\rm recurrent} for $\Phi$ is for every neighborhood 
$U$ of $x$ and any $T_0 > 0$, there is a time $T > T_0$ such that $\Phi_T(x) \in U$; the flow $\Phi$ itself is {\em recurrent} if $\mu$-almost every
point is recurrent.  
In our setting, the billiard flow $\phi$ is the flow at
constant unit speed  bouncing off at equal
angles upon hitting the rectangular obstacles.    This flow preserve the natural phase volume.
%Regular trajectories are those which never hit a corner (where the billiard flow is undefined).  

Our first result is that recurrence  satisfies a 0-1 law:
\begin{prop}\label{prop1}
For each ergodic shift invariant measure on $\F$, the wind-tree models in $\FFF^{\Z^2}$ are almost surely recurrent
or almost surely non recurrent.
\end{prop}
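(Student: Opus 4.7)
The strategy is to recognize recurrence as a shift-invariant event on $\F$ and appeal to ergodicity. Set
\[
R := \{\omega \in \F : \text{the billiard flow on } W_\omega \text{ is recurrent}\},
\]
and let $\sigma_v$ denote the shift $(\sigma_v \omega)_w = \omega_{w+v}$ for $v \in \Z^2$. The first step is to observe that the table $W_{\sigma_v\omega}$ is precisely the translate of $W_\omega$ by $-v$: an obstacle of dimension $\omega_w$ centered at the lattice point $w$ in $W_\omega$ becomes, after reindexing, an obstacle of the same dimension centered at $w-v$ in $W_{\sigma_v\omega}$.

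Consequently the map $(x,\theta) \mapsto (x-v,\theta)$ is a measure-preserving homeomorphism from the phase space of $W_\omega$ onto that of $W_{\sigma_v\omega}$ which conjugates the billiard flow $\phi^{\omega}$ to $\phi^{\sigma_v\omega}$. Recurrence of a point is preserved by any such topological conjugacy, because neighborhoods pull back to neighborhoods and return times are preserved; hence the full-measure set of recurrent points for $\phi^\omega$ is carried to the corresponding set for $\phi^{\sigma_v\omega}$. This shows $R$ is $\sigma_v$-invariant for every $v$, and then ergodicity of $\mu$ forces $\mu(R) \in \{0,1\}$.

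The one step that deserves attention is the measurability of $R$. Using a countable basis $\{U_n\}$ of open rational balls in phase space, being recurrent means that for every $n$ and every $N \in \N$, $\mu_\omega$-a.e.\ $x \in U_n$ satisfies $\phi_T^\omega(x) \in U_n$ for some $T > N$. Since the billiard dynamics over any bounded time and in any bounded region of $\R^2$ depend only on finitely many coordinates of $\omega$, the joint set $\{(\omega,x) : \phi_T^\omega(x) \in U_n \text{ for some } T > N\}$ is Borel; Fubini and countable intersection then deliver measurability of $R$. No step presents a genuine obstacle: the proposition is essentially a formal ergodic 0-1 law, with the routine measurability check being the only mildly technical ingredient.
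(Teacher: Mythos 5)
Your proof is correct, but it reaches measurability by a genuinely different route than the paper. The paper leans on Lemma \ref{lemma3}: recurrence of $W$ is equivalent to the existence of sequences $N_n, M_n$ such that $(1-\e_n)\%$ of the points of $\D_{N_n}$ return to $\D_{N_n}$ \emph{before hitting} $\D_{M_n}$, and since that event depends only on the obstacles at sites $(i,j)$ with $|i|+|j|\le M_n$, it is a finite union of cylinder sets, hence open; after uniformizing the sequence $\e_n$ the recurrent set becomes $\cap_n \cup_{M_n>N_n\ge n} O_{N_n,M_n}$, a countable intersection of open sets. You instead work directly with the flow: you encode recurrence via a countable basis $\{U_n\}$ of phase space, observe that over bounded time the unit-speed orbit stays in a bounded region and so depends on finitely many coordinates of $\omega$, and deduce Borel measurability of the joint return set and then of $R$ by Fubini. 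Both arguments are sound; yours is more self-contained (it does not need the cross-sections $\D_N$ or Lemma \ref{lemma3}) but only yields that $R$ is Borel of unspecified complexity and requires the slightly delicate joint measurability of $(\omega,x)\mapsto\phi^\omega_T(x)$ on the $\omega$-dependent phase space, which you correctly reduce to finite dependence; the paper's version produces $R$ in a low Borel class built from cylinder sets, which meshes with the Baire-category viewpoint of Theorems \ref{typical} and \ref{typical1.1}. You are also more explicit than the paper on the shift-invariance step, spelling out the measure-preserving conjugacy $(x,\theta)\mapsto(x-v,\theta)$ between $\phi^{\omega}$ and $\phi^{\sigma_v\omega}$, which the paper dispatches in one sentence. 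One small point of care: your reformulation ``$x$ is recurrent iff for every basis element $U_n$ containing $x$ and every $N$ there is $T>N$ with $\phi_T(x)\in U_n$'' is the right one, but you should note that the a.e.\ quantifier must then be taken over $x\in U_n$ for each fixed $n$ separately, exactly as you do, so that the exceptional set is a countable union of null sets.
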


The following topological result gives evidence that  wind-tree models are almost surely recurrent.

\begin{theo}\label{typical}
The is a dense $G_{\delta}$ subset $G$ of $\F$ such that the billiard flow is recurrent for every billiard table in $G$ with respect to the natural phase volume.
\end{theo}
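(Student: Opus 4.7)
The plan is to realize the set of configurations with recurrent billiard flow as a dense $G_\delta$ via a Baire category argument. Fix a countable basis $\{U_i\}_{i \in \N}$ of the phase space topology, consisting of open sets of finite positive phase volume with bounded position projections. For each $i,n,k \in \N$ and each $M>n$, set
\[
G_{i,n,k,M} = \Bigl\{\, w \in \F : \mu\bigl(\{x \in U_i : \phi^w_T(x) \in U_i \text{ for some } T \in (n,M)\}\bigr) > (1 - 1/k)\,\mu(U_i) \,\Bigr\},
\]
and let $G_{i,n,k} = \bigcup_{M>n} G_{i,n,k,M}$ and $G = \bigcap_{i,n,k} G_{i,n,k}$. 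An elementary check shows that any $w \in G$ has the property that, for every $i$ and $n$, the set of points of $U_i$ failing to return to $U_i$ after time $n$ has zero measure, which is the recurrence in the sense of the paper.

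Openness of $G_{i,n,k}$ follows from finite speed of propagation: since the flow has unit speed, its action on $U_i$ up to time $M$ depends only on the values of $w$ at lattice sites in a bounded neighborhood of the position projection of $U_i$. Hence $G_{i,n,k,M}$ is a cylinder set in the product topology on $\F$, and $G_{i,n,k}$ is open as a union of cylinders.

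For denseness, consider a basic cylinder $O = \{w \in \F : w|_{B_P} = a\}$ determined by a finite $B_P \subset \Z^2$ and a prescription $a : B_P \to \FFF$. Define $w^* \in O$ by $w^*|_{B_P} = a$ and $w^*(z) = e$ for every $z \in \Z^2 \setminus B_P$, so that the billiard table of $w^*$ differs from that of $W_e$ only on a bounded region $V$. It then suffices to verify that $\phi^{w^*}$ is recurrent, since this places $w^*$ in $O \cap G_{i,n,k}$ for every $i,n,k$.

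The main obstacle lies in this last verification. The flow $\phi^{W_e}$ is recurrent by \cite{HuLeTr} since $e \in \EEE \cup \EEE'$, and the hope is that the recurrence survives the modification of the table on the bounded set $V$. I would compare the first return maps of $\phi^{W_e}$ and $\phi^{w^*}$ to the complement of $V$: both preserve Lebesgue measure, they coincide on orbits that avoid $V$, and the statistics of excursions into $V$ are governed by the conservative exterior dynamics of $\phi^{W_e}$. Equivalently, $\phi^{W_e}$ can be described as in \cite{HuLeTr} as a $\Z^2$-valued cocycle extension of the flow on a compact quotient translation surface, and the modification of the obstacle at the finitely many sites of $B_P$ amounts to a compactly supported perturbation of this cocycle, which one expects to preserve recurrence.
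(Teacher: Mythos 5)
Your Baire-category scaffolding (open sets defined by quantitative return statistics on a countable basis, density checked on basic cylinders) is reasonable, and, modulo the technical point that the phase space --- hence the sets $U_i$ and their measures --- itself depends on $w$, it correctly reduces the theorem to a single claim: the table $w^*$ that agrees with a prescribed cylinder on $B_P$ and carries the obstacle $e$ at every other lattice site is recurrent. The genuine gap is that you do not prove this claim, and the heuristics you offer for it do not work as stated. The description of the flow on $W_e$ as a $\Z^2$-cocycle over a compact translation surface in \cite{HuLeTr} depends essentially on the periodicity of $W_e$; the table $w^*$ is not periodic, so it is not a $\Z^2$-cover of any compact surface and ``a compactly supported perturbation of this cocycle'' has no meaning for it. Likewise, comparing the first return maps of $\phi^{W_e}$ and $\phi^{w^*}$ to the complement of $V$ begs the question: one must show that the set of points that enter and leave $V$ infinitely often without ever recurring is null, which is exactly what is to be proved.

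The paper's proof is arranged precisely so that one never has to establish recurrence of any specific aperiodic table. For each $\e>0$ and each cylinder $C$ on $B_N$ it passes to the smaller cylinder $C'$ obtained by additionally placing the obstacle $e$ at every site of a thick annulus $A_{N,N_1}$; recurrence of $W_e$ lets one choose $N_1$ so that a proportion $1-\e$ of the points of $\D_N^+$ return to $\D_N$ before leaving the annulus, and this single-scale quantitative statement transfers verbatim to every table in $C'$ because it only involves the part of the table inside the annulus. The union of these $C'$ is open and dense, and Lemma~\ref{lemma3}(3) converts the existence of infinitely many scales with $(1-\e_n)$-return into genuine recurrence. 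The same lemma would also close the gap in your route: for $N$ large enough that $B_P \subset B_N$, the table $w^*$ agrees with $W_e$ on the entire outer region bounded by $D_N$, so vectors in $\D_N^+$ return to $\D_N$ almost everywhere exactly as they do for $W_e$, while vectors in $\D_N^-$ return by Poincar\'e recurrence in the finite-volume inner region; hence $f_N$ is defined almost everywhere for all large $N$ and Lemma~\ref{lemma3} gives recurrence of $w^*$. As written, however, your proposal leaves its crucial step open.
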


A direction $\theta \in \mathbb{S}^1$ is called {\em purely periodic} if all regular orbits are
periodic in this direction are periodic.   The tables in $\F$ are called {\em square tiled} if $\FFF$ is a finite subset
of $\EEE$.  In this case there is a positive integer $Q$, the least common multiple of the denominators of the dimensions
of the obstacles, such that each table $W \in \F$ can be tiled in the standard way (checkerboard tiling) by squares with side length $1/Q$.

\begin{theo}\label{typical1.1}
If $\F$ is square tiled, then there is a dense $G_{\delta}$ subset $G$ of $\F$ such that the billiard flow is recurrent for every billiard table in $G$ with respect to the natural phase volume and for every billiard table in $G$ there is a dense set of purely periodic directions $\theta \subset 
\mathbb{S}^1$.
\end{theo}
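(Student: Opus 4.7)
The plan is to decouple the statement into two pieces. Recurrence is already provided by Theorem \ref{typical}, so it suffices to exhibit a dense $G_\delta$ subset of $\F$ on which every table admits a dense set of purely periodic directions; intersecting with the recurrent $G_\delta$ of Theorem \ref{typical} then gives $G$. My route would be Baire category, based on approximation by truly periodic configurations.

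The first building block is that periodic tables are dense in $\F$. Writing $\F_{\mathrm{per}}$ for the set of configurations that are $P\Z^2$-periodic for some $P \in \N$, any cylinder $[C]$ determined by the values on a finite box $B \subset \Z^2$ meets $\F_{\mathrm{per}}$ via the periodic extension of $C|_B$, so $\F_{\mathrm{per}}$ is countable and dense. The second building block is that every $W^\circ \in \F_{\mathrm{per}}$ already admits a dense set of purely periodic directions. Taking $P$ a multiple of $Q$, the Katok--Zemlyakov four-fold unfolding of the quotient billiard $W^\circ / (P\Z)^2$ is a compact translation surface $\Sigma_{W^\circ}$; the square tiled hypothesis forces this surface to be an origami and in particular a Veech surface. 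Veech's dichotomy supplies a dense set of completely periodic directions on $\Sigma_{W^\circ}$, and the arithmetic condition $\FFF \subset \EEE$ ensures, as in the analysis of \cite{HuLeTr}, that a dense subfamily of these has vanishing $\Z^2$-holonomy. Such directions lift to compact cylinders on $\R^2$, so they are purely periodic for the billiard on the unbounded table $W^\circ$.

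For the Baire step I would fix a countable basis $\{J_k\}_{k \in \N}$ of open arcs of $\mathbb{S}^1$ and define
\[
A_k := \{W \in \F : \text{some direction in } J_k \text{ is purely periodic for } W\}.
\]
The previous paragraph makes each $A_k$ dense in $\F$. The remaining task is to show that every $A_k$ is a $G_\delta$; the intersection $\bigcap_k A_k$ is then a dense $G_\delta$ whose members admit a purely periodic direction in every arc of the basis, hence a dense set of such directions.

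The main obstacle is the $G_\delta$ property of $A_k$. Being purely periodic in a direction $\theta$ is a universal statement over all starting points, and a modification of the obstacle at a single lattice site arbitrarily far from the origin can a priori destroy it. Here the square tiled hypothesis is essential: in a rational direction on a square tiled table, a cylinder of parallel orbits is locally rigid and is determined by a finite segment of the configuration it meets. Leveraging this rigidity, I would rewrite ``$\theta$ is purely periodic for $W$'' as a countable intersection of open conditions on $W$, indexed by a discrete family of seed orbits issued from $\frac{1}{Q}\Z^2$-points on a cross-section, each demanding that the seed close up in finite time. Making this reduction precise --- and in particular cutting the uncountable family of initial conditions down to a countable one --- is the technical heart of the argument.
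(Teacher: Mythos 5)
Your decomposition (recurrence from Theorem \ref{typical}, plus a separate dense $G_\delta$ for periodic directions) and your density argument for $A_k$ (periodic tables are dense; square-tiled periodic tables are origamis, hence Veech, and the holonomy analysis of \cite{HuLeTr} lets cylinder directions descend to the plane) are reasonable in outline. But the proof has a genuine gap exactly where you flag it: the $G_\delta$ property of $A_k$ is not a routine verification, and as set up it has two concrete defects. First, even granting that for a fixed rational direction $\theta$ the set $P_\theta := \{W \in \F : \theta \text{ is purely periodic for } W\}$ is $G_\delta$, your $A_k = \bigcup_{\theta \in J_k} P_\theta$ is a (countable) union of $G_\delta$ sets, hence only $G_{\delta\sigma}$; you would need instead to intersect the $P_\theta$ over a \emph{fixed} countable dense set of candidate directions, which in turn requires each individual $P_\theta$ to be dense, not merely the union. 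Second, and more seriously, the reduction of ``every regular orbit in direction $\theta$ is periodic'' to a countable family of seeds is dubious on an unbounded table: a regular orbit with irrational transverse offset can escape to infinity (hence be non-periodic) while every $\frac{1}{Q}\Z^2$-seed parallel to it separates from it at some corner and then closes up. Local rigidity of cylinders is a statement about compact pieces of orbit and gives no control once an orbit leaves every compact set, so ``all seeds periodic $\Rightarrow$ all regular orbits periodic'' needs an a priori confinement mechanism that your argument does not provide.

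The paper sidesteps all of this by reusing the \emph{same} $G$ built in the proof of Theorem \ref{typical}: a table $W \in G$ coincides with $W_e$ on arbitrarily wide annuli $A_{N_n,N_1(N_n,\e_n)}$. For a strongly parabolic direction $\theta$ of $W_e$ (these are dense in $\mathbb{S}^1$ and of rational slope by \cite{HuLeTr}) with common cylinder length $M$, as soon as an annulus has width $>2M$ every $W_e$-cylinder crossing its middle curve $D_{P_n}$ stays inside the annulus, so $D_{P_n}$ consists entirely of periodic orbits of $W$ and acts as a barrier; any regular orbit starting inside $B_{P_n}$ is therefore confined, and a confined regular orbit of rational slope on a square-tiled table is periodic by pigeonhole on the finitely many $y$-values it can assume on the lines $x \in \frac{1}{Q}\Z$. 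Since $P_n \to \infty$, every regular orbit is periodic, so the whole dense set of strongly parabolic directions of $W_e$ is purely periodic for every $W \in G$ simultaneously, and no new $G_\delta$ set has to be constructed. I would recommend replacing the $A_k$ route by this barrier-plus-pigeonhole argument; it supplies precisely the confinement your seed reduction is missing.
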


In the proofs we will prove the recurrence of certain first return maps.  For this purpose, 
consider a map $F$ on a measured topological space $(\Omega,\mu)$. A point $x \in \Omega$ is called {\rm recurrent} for $F$ if 
for every neighborhood 
$U$ of $x$ there is a time $N > 0$ such that $F^N(x) \in U$; the map $F$ itself is {\em recurrent} if $\mu$-almost every
point is recurrent. 

Next we introduce the first return maps we will use.
Fix a table $W \in \F$ and suppose that $N \ge 1$.
Let $B_N := \{(i,j) \in \Z^2 : |i| + |j| < N\}$ and $A_{N,N_1} := \{(i,j) \in \Z^2 : N \le |i| + |j| < N_1\}$. 
Consider the continuous simple curve $D_N = D_N(W)$ in the billiard table $W$ consisting of the segments of $|x| + |y| = N$ which are in the interior of the table (not in the obstacles)
and the ``outer'' part of the boundary of the obstacles $e$ with centers $(i,j)$ satisfying $|i| + |j| = N$   (see figure 1). 
This curve separates the table into
two parts, the finite (or inner) part and the infinite (or outer) part.  Let $\D_N^-$ consist of the unit vectors with base point in $D_N$ pointing into the finite
part and $\D_N^+$ pointing into the infinite part of the table. Let $\D_N := \D^-_N \cup \D^+_N$ and  $\D = \cup_{N \ge 1} \D_N$. 
Consider the first return maps $f_N : \D_N \to \D_N$ (wherever they are  well defined) and  $f:\D \to \D$.
Clearly this map preserve the phase area if they are well defined.  Note that the singular points  (points whose image, or preimage hits a corner or is tangent to a side of an obstacle) are of measure 0.

\medskip

\begin{figure}[ht]
\begin{minipage}[ht]{0.33\linewidth}
\centering
\begin{tikzpicture}
  % windtree, a = b = 1/2, slope = 1/1
  \begin{scope}
  [x=1.75cm,y=1.75cm,scale=0.33]
  \foreach \i in {-4,-2,0,...,2,4}
  \foreach \j in {0,2,...,4,6,8}
    \draw[thin] (\i,\j) rectangle +(1,1);
     \draw[very thick,dotted] (0,0) -- (1,0) -- (1,1)-- (2,2) -- (3,2) -- (3,3) -- (4,4) -- (5,4) -- (5,5) -- (4,5) -- (3,6) -- (3,7) -- (2,7) -- (1,8) -- (1,9) -- (0,9) -- (0,8) -- (-1,7) -- (-2,7) -- (-2,6) -- (-3,5) -- (-4,5) -- (-4,4) -- (-3,4) -- (-2,3) -- (-2,2) -- (-1,2) -- (0,1) -- (0,0);

  %\draw[very thick,dotted] (2,3) -- (1,4) -- (1,5) -- (0,5) -- (0,4) -- (-1,3) -- (-2,3) -- (-2,2) -- (-1,2 ) -- (0,1) -- (0,0) -- (1,0) -- (1,1) -- (2,2) -- (3,2) -- (3,3) -- (2,3);
  
  \end{scope}
\end{tikzpicture}
\end{minipage}
\caption{The curve $D_2$ as seen on the table $W_e$ consisting of the obstacle $e$ at such lattice point.}
\end{figure}

The proofs of the results rely on the following lemma.
\begin{lemma}\label{lemma3} The following statements are equivalent.
\begin{enumerate}
\item The wind-tree model $W$ is recurrent 
\item $f : \D \to \D$ is recurrent 
\item there is a positive sequence $\e_n \searrow 0$ and a sequence $N_n \to \infty$
such that $f_{N_n}$ is well defined for at least $(1-\e_n)\%$ of the points in $\D_{N_n}$.
\end{enumerate}
\end{lemma}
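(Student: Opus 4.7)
The strategy is to prove $(1)\Leftrightarrow(2)$ as a standard Poincar\'e-section correspondence, and to prove $(2)\Leftrightarrow(3)$ by applying Poincar\'e's recurrence theorem to the measure-preserving maps $f_{N_n}$ on the finite-measure sections $\D_{N_n}$.

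For $(1)\Leftrightarrow(2)$, since $\D$ is a countable union of curves that are locally transverse to the flow, flow-recurrence at $x\in\D$ is equivalent to the flow returning to $\D$ near $x$, which is exactly $f$-recurrence at $x$. For a phase point $p$ off $\D$, either its forward orbit meets $\D$ at some time (reducing to the previous case by applying the flow) or the orbit stays in the inner region $B_1$ forever, in which case Poincar\'e recurrence for the flow on this invariant set of finite Liouville measure gives recurrence a.e.

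For $(2)\Rightarrow(3)$: if $f$ is recurrent, then for almost every $x\in\D_N$ the iterates $f^k(x)$ accumulate at $x$; since $\D_N$ is open and closed in $\D$ near $x$, the iterates lie in $\D_N$ for large $k$, so $f_N(x)$ is defined a.e.\ on $\D_N$. The sequences $\e_n=1/n$, $N_n=n$ then witness (3).

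For the crucial direction $(3)\Rightarrow(2)$, I would fix $n$ and view $f_{N_n}$ as a measure-preserving partial map on the finite-measure section $\D_{N_n}$, with domain of relative measure at least $1-\e_n$. Poincar\'e's recurrence theorem applied to this partial map shows that almost every point of the domain is $f_{N_n}$-recurrent, hence $f$-recurrent; so the non-$f$-recurrent set $R\subset\D$ satisfies $|R\cap\D_{N_n}|\le\e_n|\D_{N_n}|$. To lift this from the subsequence $\D_{N_n}$ to all of $\D$, I would exploit the flow-invariance of $R$: any $x\in R$ has an unbounded forward orbit (bounded non-recurrent orbits form a null set by Poincar\'e applied on the trapping region), and therefore crosses $\D_{N_n}$ for every sufficiently large $n$. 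The first-crossing flow map $\D_M\to\D_{N_n}$ is measure-preserving and injective, sending $R\cap\D_M$ into $R\cap\D_{N_n}$, so that $|R\cap\D_M|\le\e_n|\D_{N_n}|$. Passing to a subsequence, together with the $\sigma$-finite Hopf decomposition of the flow restricted to $R$, forces $|R\cap\D_M|=0$ for every $M$.

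The main obstacle is this final propagation step. Turning the per-section relative bounds $\e_n\to 0$ into a global null-measure statement for $R$ requires some care, since the section measures $|\D_{N_n}|$ grow with $n$ and the ambient Liouville measure on the billiard table is infinite; everything else is a clean Poincar\'e-type argument.
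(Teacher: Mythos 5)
Your overall architecture is the same as the paper's: treat $(1)\Leftrightarrow(2)$ as a transversal-section/Fubini correspondence (with Poincar\'e recurrence on the finite-measure set of trapped orbits handling points that never reach $\D$), get $(2)\Rightarrow(3)$ for free, and prove $(3)\Rightarrow(2)$ by injecting the bad set of a fixed section into the small bad set of $\D_{N_n}$. But two steps in your $(3)\Rightarrow(2)$ do not work as written. First, ``Poincar\'e's recurrence theorem applied to this partial map'' is not a valid step: a measure-preserving partial map defined on a $(1-\e)$-fraction of a finite-measure space can fail to be recurrent at every point of its domain, since mass can leak out of the domain at each iterate. The correct replacement (and the paper's actual mechanism) is the escape-set argument: if $U\subset\D_{N}$ is the set of points whose $f$-orbit never returns to $\D_{N}$, then by invertibility the sets $f^jU$, $j\ge 0$, are pairwise disjoint, so almost every point of $U$ visits each finite-measure section only finitely often; the set of points returning only finitely often is then controlled by the never-returning set via the last-return map.

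Second, your propagation step uses the first-crossing map $\D_M\to\D_{N_n}$ restricted to the non-recurrent set $R$; this map is not injective on $R\cap\D_M$ (two points of $R\cap\D_M$ lying on the same orbit have the same first crossing), so it need not preserve measure in the sense you want. The paper's fix is to take $U\subset\D_{N_{n_0}}$ to be the set of points that \emph{never return} to $\D_{N_{n_0}}$ --- on which no two points share a forward orbit, so injectivity is automatic --- and to map $x\mapsto f^{m_n(x)}(x)$, where $m_n(x)$ is the \emph{last} visit of the orbit to $\D_{N_n}$; the image then consists of points where $f_{N_n}$ is undefined, giving $|U|\le|\{f_{N_n}\text{ undefined}\}|$. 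That last-visit construction is the idea missing from your write-up. I will add that the quantitative worry you flag at the end is not spurious: read as relative measure, the bound obtained is $\e_n$ times the measure of $\D_{N_n}$, and the paper simply asserts ``this set has measure at most $\e_n$''; your appeal to a subsequence and the Hopf decomposition does not close this either, so if you pursue this route you should either track absolute measures in condition (3) or supply the argument that the relative bounds suffice.
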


\begin{proof}
Clearly (1) implies (2) and (2) implies (3).  We will now show that (3) implies (2).
We claim that if $N_1 < N_2$ and $f_{N_2}$ is well defined almost everywhere
then $f_{N_1}$ is also well defined almost everywhere.  Simply consider the map $f$ induced on the set $\cup_{N \le N_2} \D_N$. This map is 
well defined almost everywhere since $f_{N_2}$ is.  Thus $f_{N_1}$ is recurrent by the Poincare recurrence theorem.  Thus to show that each
$f_N$ is actually defined almost everywhere and thus $f$ is recurrent f it suffices to show that $f_N$ is for infinitely many $N$.  

Note that the map $f$ is invertible.  Consider a set $U \subset \D_{N_n}$
which never recurs to $\D_{N_n}$.  We claim that  $f^j U \cap f^k U = \emptyset$ for all $j > k \ge 0$. If not then by the invertibility of $f$ we would
have $f^{j-k}U \cap U \ne \emptyset$, i.e. the points in $U$ recur to $U \subset \D_{N_n}$, a contradiction.  
This implies that since the set $\D_{N_n}$ are of finite measure, almost every point in $U$ can visit
each set $\D_{N_n}$ only a finite number of times.
Thus we can define for almost every  $x \in U$ a (finite time) $m_{n}(x)$ be the last time the orbit of $x$ visits $\D_{N_{n}}$.
The map $F(x) = f^{m_{n}(x)}(x)$ is a measure preserving map of $U$ into the set of nonrecurrent points in $\D_{N_n}$.  This set
has measure at most $\e_n$. Since this hold for arbitrarily large $n$ we conclude that $U$ is of measure $0$, i.e.\ $f_N$ is well defined
almost everywhere and thus $f$ is recurrent.

Finally we need to show that (2) implies (1).
Consider any small open ball $B$ in the phase space.  Flow each
(non-singular) point in this ball until it hits the set $\D$. Since the ball is open, it has positive phase volume, and its image on the set $\D$
also has positive phase area.   Almost every of these points is $f$ recurrent.  

Fix a nonsingular $x \in B$ such that $x_N := \Phi_t(x) \in \D_N$.  Note that
by transversality and the Fubini theorem  almost every $x \in B$ corresponds to a $f$-recurrent $x_N$.  To conclude the proof we suppose that
$x_N$ is $f$-recurrent and we will show that this implies that  $x$ is $\phi$-recurrent.
Choose a  open neighborhood $U$ of $x$ small enough that for each $y \in U$ there is a $t(y)$ very close to $t$ such that
$\Phi_{t(y)}(y)  \in \D_N$.  Let $U' = \{\Phi_{t(y)}(y) : y \in U\}$.  This is a small neighborhood of $\Phi_t(x)$, and by the above
results there is an (arbitrarily large) $n$ such that $f^n x_N \cap \in U'$. Thus $f^n x_N = \Phi_s(x_N) = \Phi_{s+t}(x) \in U'$ for some large $s$.   
Since  this point $\Phi_{s+t}(x)$ is  in $U'$ is the image $\Phi_{t(y_0)}y_0$ of some $y_0 \in U$.  Thus  $\Phi_{s+t - t(y_0)}(x) = y \in U$
and we conclude that $x$ is recurrent.
\end{proof}

%Consider the first return map $f$ to the boundary of the obstacles. This map also preserves a natural phase volume.
%For any direction $\theta$ for which every billiard orbit must hit an obstacle, the recurrence to the billiard map $f_{\theta}$ implies the recurrence of the billiard flow $\phi^{\theta}$.  Our tables in $G$ will be constructed in such a way, that except for a finite number of directions $\theta$,
%every billiard orbit must hit an obstacle.

\begin{proofof}{Theorem \ref{typical}} The idea of the proof is simple.  A table in our dense $G_{\delta}$ will have
infinitely many large annuli for which the table has the obstacle $e$ at all lattice sites in the annuli.  The widths of these annuli will increase
sufficiently quickly to guarantee the recurrence.

Fix $\e > 0$ and $N \ge 1$.
Fix a cylinder set in $C = C_{k,N} \in \FFF^{B_N}$, i.e. $C$ is given by specifying the rectangle size (or absence of rectangle) at all the lattice points in $B_N$ (the index $k$ enumerates the finite (or countable) collection of all such cylinder sets).
We consider an $N_1 >> N$ and the cylinder set $C' = C'_{k,N,N_1}$ such that $C' \subset C$ and  for each $c \in C'$ $c_{(i,j)} = e$  
for all $(i,j) \in A_{N,N_1}$.
Consider the table $W_e$. Since it is recurrent, for each fixed $N$, we can choose $N_1 = N_1(N,\e) $ sufficiently large so that on this table
$(1-\e)\%$ of the points in $\D^+_N$ recur to $\D_N$  before leaving $A_{N,N1}$. 
The dynamics for any table in the cylinder $C'$
is identical to the dynamics on the table $W_e$  as long
as it stays in the annulus $A_{N,N_1}$.  Since the wind-tree tables in the cylinder $C'$ agree
with $W_e$ on $A_{N,N_1}$ the $(1-\e)\%$-recurrence hold for all these tables.  

Consider the set $O_{\e} := \cup_{N \ge 1}  \cup_k C'_{k,N,N_1(N,\e)}$.  Since cylinder sets are open this set is open.  Since the 
union is taken over all
cylinder sets it is dense.  Now fix a sequence $\e_n \searrow 0$
and let $G := \cap_n O_{\e_n}$.  Clearly $G$ is a dense $G_{\delta}$ set.  For each table $W \in G$ and for all $n$ there exists $N_n = N_n(W)$
and $k(N_n)$ such that $W \in C'_{k(N_n),N_n,N_1(N_n,\e_n)}$.  This means that for each $n \ge 1$ at least $(1-\e_n)\%$ of the points in the set
$\D_{N_n}$ recur to $\D_{N_n}$.   We apply Lemma \ref{lemma3} to conclude the recurrence.
\end{proofof}

For the proof it is very important that in the annuli the tables agree with a recurrent full occupancy table.
On the other hand for the recurrence the shape of the table in between the annuli is not at all important.  Instead of taking rectangular scatterers, we could choose
circular scatterers, or more byzantine ones (as long as we can define billiard dynamics which preserve the phase volume, for example if they are piecewise $C^1$), disjoint and finitely many in any compact region.

Actually  we can also relax the fact that the tables agree with a recurrent full occupancy table on the annuli.  We can replace this by an
almost agreement in the following sense. The centers of the obstacles could be assumed
to be uniformly distributed in a small open ball around each lattice point.  For a cylinder
set  in which the restriction to an annuli (or another finite set) the centers are very close to the lattice, the 
points in $\D^+_N$ are almost recurrent.  To write the details is technically more complicated since the measure spaces vary in a more
dramatic way than they do in the presented proof. \\

\begin{proofof}{Theorem \ref{typical1.1}} 
The recurrence is a special case of Theorem \ref{typical}. A purely periodic direction is called {\em strongly parabolic} if the phase space 
decomposes into
an infinite number of cylinders isometric to each other.
In \cite{HuLeTr}  it was shown that for the table $W_e$ the set of strongly parabolic directions $\theta$ are dense in 
$\mathbb{S}^1$.  Furthermore  these directions have rational slope ($\pm p/q$ with $p$ and $q$ depending on $\theta$).  
We will show that for any strongly parabolic direction $\theta$ for $W_e$, for any table $W \in G$ a.e.~orbit on this table in the direction
$\theta$ is periodic.  

Fix $\theta$ a strongly parabolic direction for $W_e$ and let $M$ be the common (geometric) length of the cylinders.
Let $n$ be so large that $N_1(N_n,\e_n) - N_n > 2M$ and let $P_n := (N_n + N_1(N_n,\e_n))/2$. On the table $W_e$ all
the cylinders crossing $D_{P_n}$ stay in in $A_{N_n,N_1(N_n,\e_n)}$. Since the table $W$ coincides with $W_e$ on this
set  all regular orbits in $W$ staring on $D_{P_n}$ are periodic.

Now consider any phase point in $W$ with regular orbit starting strictly inside $B_{P_n}$. 
First of all since $D_{P_n}$ consists completely of
periodic orbits this orbit can not reach $D_{P_n}$ without being one of these periodic orbits.  If the orbit does not reach $D_{P_n}$ then it 
stays completely inside $B_{P_n}$.  Since the orbit's slope is rational, bounded and the billiard table is square tiled, the $y$ coordinate
can only take a finite number of values when crossing the lines $x=n/Q$ with $n \in \mathbb{Z}$.  Thus it must visit some
point $(n_0/Q,y_0)$ twice with the same direction. Since the dynamics is invertible the orbit is periodic.
\end{proofof}

\begin{proofof}{Proposition \ref{prop1}}
We use Lemma \ref{lemma3},  a wind-tree model $W \in \F$ is recurrent if and only if there is are sequences $\e_n \searrow 0$ and $N_n \to \infty$ such that $(1-\e_n)\%$ of all points in $\D_{N_n}$ recur to $\D_{N_n}$.  If a wind-tree model is recurrent with the sequences
as above, then there exists $M_n >  N_n$ such that $(1-C\e_n)\%$ of all points in $\D_{N_n}$ recur to $\D_{N_n}$ before hitting $\D_{M_n}$. 
Applying Lemma \ref{lemma3} shows that the converse is  also true.

For each recurrent table, by choosing a subsequence we can suppose that $N_n \ge n$.
Fix a sequence $\e'_n \searrow 0$.  We can assume furthermore, that it satisfies the above condition for this fixed sequence
since if $\e_n \to 0$ faster than $\e'_n$, then it satisfies the condition with $\e'_n$, and if not  then we can
choose a subsequence for which it goes faster.
Summarizing a wind-tree model is recurrent
iff there exists $M_n > N_n \ge n$ such that $(1-C\e'_n)\%$ of all points in $\D_{N_n}$ recur to $\D_{N_n}$ before hitting $\D_{M_n}$.

One needs to specify only a finite part of the table to check this property at a fixed stage $n$, i.e.~if a table $W \in \F$ is recurrent, then all the 
tables in the cylinder set with the obstacles specified to be those of $W$ at
lattice points $(i,j)$ with $|i|+|j| \le M_n$ satisfy this property for this fixed $n$.  Let $O_{N_n,M_n}$ denote the (finite) union of all 
cylinder sets such that this happens at stage $n$.  This is an
open set.  Thus by the above characterization, 
the set of recurrent wind-tree models can be written as $\cap_{n=1}^{\infty} \cup_{M_n > N_n \ge n} O_{N_n,M_n}$ is a Baire 
measurable set.

The notion of recurrence is shift invariant (in the space $\F$).  Thus since the set of recurrent wind-tree models is measurable and invariant
it is of measure 0 or 1 for any invariant measure which is ergodic for the $\Z^2$ shift.
\end{proofof}

\section*{Lorentz gas}
A Lorentz gas is similar to the Ehrenfest
wind-tree model, with the rectangular obstacles replaced by strictly convex ($C^3$) obstacles. A Lorentz gas is said to verify the 
finite horizon condition if  the minimal distance between obstacles is strictly positive and any infinite line intersects infinitely many obstacles with bounded gaps between the intersections.  The long standing conjecture that periodic Lorentz gases are recurrent 
was independently resolved by Conze \cite{Co} and Schmidt \cite{Sc} in the 1990's building on previous results on the hyperbolic structure.  Using the hyperbolic structure 
models, Lenci has shown that Baire typical Lorentz gases with finite horizon are recurrent \cite{Le2}.  
Here we prove that Baire typical 
Lorentz gases are recurrent. Our typical gas will satisfy a weaker property which we call locally finite horizon: the distance between obstacles  is still 
strictly
bounded away from 0 and every infinite line intersects infinitely many obstacles,
however the gaps between the intersections are not necessarily bounded. 

Rather than try to state a general result, we give two examples.  Generalizations to other situations should be clear.\\
1) Let $\mathbb{T}$ denote the triangular lattice.
Fix $e$ and convex open set with $C^3$ boundary (for example a ball), such that if we place the obstacle $e$ at
each lattice site, then the corresponding infinite table satisfies the finite horizon condition.
Let $0$ denote the absence of an obstacle. Then the set of Lorentz billiard tables
we consider is $\{0,e\}^{\mathbb{T}}$, note that since we are allowing empty cells, these tables do not have finite horizon, nor
apriori locally finite horizon.

2) Consider the $\Z^2$ lattice.  Here we let $e$ denote the obstacle consisting of the union of 5 convex open sets with 
$C^3$ boundary (again for examples balls). 
The  5 convex sets are chosen
so the table consisting of the obstacle $e$ at each lattice site is of finite horizon.  Again $0$ denotes the absence of an
obstacle.  We consider the set of Lorentz gases $\{0,e\}^{\Z^2}$. These tables, like those above, do not necessarily have
finite horizon, nor locally finite horizon.

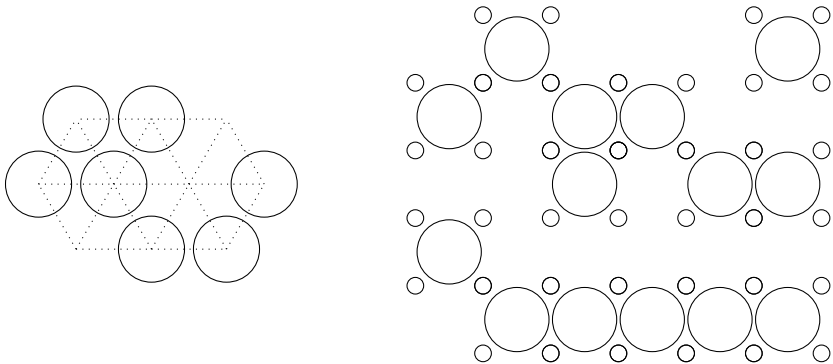
\begin{figure}[ht]
\begin{minipage}[ht]{0.48\linewidth}
\centering
\begin{tikzpicture} [scale=1]
\coordinate [] (A1) at (0,0); 
\coordinate [] (A2) at (1,0); 
\coordinate [] (A3) at (2,0);
 \coordinate [] (A4) at (3,0);

\draw [dotted] (A1) -- (A2);
\draw [dotted] (A2) -- (A3);
\draw [dotted] (A3) -- (A4);

\node (C1) [circle through=(A2)] at (A1) {}; 
\node (C2) [circle through=(A1)] at (A2) {};
\node (C3) [circle through=(A2)] at (A3) {}; 
\node (C4) [circle through=(A3)] at (A4) {}; 

\coordinate [] (B1) at (intersection 2 of C1 and C2);
\coordinate [] (B2) at (intersection 1 of C3 and C2);
\coordinate [] (D1) at (intersection 1 of C1 and C2);
\coordinate [] (D2) at (intersection 2 of C3 and C2);
\coordinate [] (B3) at (intersection 2 of C3 and C4);
\coordinate [] (D3) at (intersection 1 of C3 and C4);

\draw [dotted] (A1) -- (B1);
\draw [dotted] (A2) -- (B1); 
\draw [dotted] (A2) -- (B2);
\draw [dotted] (A3) -- (B2);
\draw [dotted] (A3) -- (B3);
\draw [dotted] (A4) -- (B3);
 
\draw [dotted] (B2) -- (B1);
\draw [dotted] (D1) -- (D2);
\draw [dotted] (D2) -- (D3);
\draw [dotted] (B2) -- (B3);

\draw [dotted] (D1) -- (A1); 
\draw [dotted] (D1) -- (A2);
\draw [dotted] (D2) -- (A2);
\draw [dotted] (D2) -- (A3);
\draw [dotted] (D3) -- (A4);
\draw [dotted] (D3) -- (A3);

\draw (B1) circle (12.5pt);
\draw (A2) circle (12.5pt);
\draw (A1) circle (12.5pt);
\draw (A4) circle (12.5pt);
\draw (B2) circle (12.5pt);
\draw (D3) circle (12.5pt);
\draw (D2) circle (12.5pt);

\end{tikzpicture}
\end{minipage}
%\caption{A random Lorentz gas with triangular lattice}
%\end{figure}
%  \begin{figure}[ht]
\begin{minipage}[ht]{0.48\linewidth}
\centering

\begin{tikzpicture}[scale=0.45]
\draw (2,0) circle (27pt);
\draw (3,1) circle (7pt);
\draw (3,-1) circle (7pt);
\draw (1,1) circle (7pt);
\draw (1,-1) circle (7pt);

%\draw (14,0) circle (27pt);
%\draw (13,-1) circle (7pt);
%\draw (13,1) circle (7pt);
%\draw (15,-1) circle (7pt);
%\draw (15,1) circle (7pt);

%\foreach \i in {2,6,8}
%{    \draw (\i,-4) circle (27pt);
 %   \draw (\i+1,-5) circle (7pt);
%\draw (\i+1,-3) circle (7pt);
%\draw (\i-1,-5) circle (7pt);
%\draw (\i-1,-3) circle (7pt);}

\foreach \i in {2,6,8}
 {   \draw (\i,4) circle (27pt);
      \draw (\i+1,5) circle (7pt);
\draw (\i+1,3) circle (7pt);
\draw (\i-1,5) circle (7pt);
\draw (\i-1,3) circle (7pt);}

\foreach \i in {4,6,8,10,12}    
\foreach \j in {-2}
{ \draw (\i,\j) circle (27pt);
     \draw (\i+1,\j+1) circle (7pt);
\draw (\i+1,\j-1) circle (7pt);
\draw (\i-1,\j-1) circle (7pt);
\draw (\i-1,\j+1) circle (7pt);}

\foreach \i in   {6,10,12} 
\foreach \j in {2} 
  { \draw (\i,\j) circle (27pt);
     \draw (\i+1,\j+1) circle (7pt);
\draw (\i+1,\j-1) circle (7pt);
\draw (\i-1,\j-1) circle (7pt);
\draw (\i-1,\j+1) circle (7pt);}

     \foreach \i in   {4,12} 
\foreach \j in {6} 
  { \draw (\i,\j) circle (27pt);
     \draw (\i+1,\j+1) circle (7pt);
\draw (\i+1,\j-1) circle (7pt);
\draw (\i-1,\j-1) circle (7pt);
\draw (\i-1,\j+1) circle (7pt);}
 \end{tikzpicture}
    \end{minipage}%
    \caption{Random Lorentz gases with triangular and square lattices}

\end{figure}

The proof  of the following theorem
is essentially identical to the proof of Theorem 2 and will be omitted.

\begin{theo}\label{typical1}
The is a dense $G_{\delta}$ subset $G$ of each of the above two example such that the billiard flow is recurrent for every billiard table in $G$ with respect to the natural phase volume.
\end{theo}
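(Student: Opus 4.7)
The plan is to transfer the argument of Theorem \ref{typical} step by step, with the role of the recurrent full-occupancy table $W_e$ now taken by the periodic Lorentz gas $L_e$ obtained by placing the obstacle $e$ at every lattice site. By hypothesis $L_e$ has finite horizon, so it is recurrent by the Conze--Schmidt theorem \cite{Co,Sc}. The three ingredients to port are: (a) the separating curves $D_N$ and the first-return maps $f_N,f$; (b) the analogue of Lemma \ref{lemma3}; (c) the dense $G_\delta$ construction.

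For (a), I would pick a smooth simple closed curve $\Gamma_N$ around the origin (a circle of radius $\sim N$ in the triangular example, a lattice-diamond of size $N$ in the $\Z^2$ example) and define $D_N$ by replacing, for each obstacle of $L_e$ that $\Gamma_N$ enters, the portion of $\Gamma_N$ inside that obstacle by the corresponding outer boundary arc. This $D_N$ is a simple continuous curve in the billiard table that separates it into a bounded inner component and an unbounded outer one. The sets $\D_N^\pm$, $\D_N$, $\D$ and the first-return maps are then defined exactly as in the wind-tree case, and the set of singular phase points in $D_N$ is of measure zero thanks to the $C^3$ regularity of the obstacles. For (b), the proof of Lemma \ref{lemma3} uses only invertibility, measure preservation, finiteness of the phase area of $\D_N$ and the Poincar\'e recurrence theorem; none of these features are specific to rectangular obstacles, so it transfers verbatim.

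For (c), fix $\e>0$, $N\ge 1$ and a cylinder $C_{k,N}\subset \{0,e\}^{B_N}$; let $C'_{k,N,N_1}$ be the refinement requiring the obstacle $e$ at every lattice point of $A_{N,N_1}$. Applying the recurrence of $L_e$ to the induced map on $\D_N\cup\D_{N_1}$ gives $N_1=N_1(N,\e)$ such that on $L_e$ at least $(1-\e)$ of the points of $\D_N^+$ recur to $D_N$ before reaching $D_{N_1}$. Any table in $C'_{k,N,N_1}$ agrees with $L_e$ on $A_{N,N_1}$, so the same $(1-\e)$-recurrence estimate holds for all of them. Setting $O_\e:=\bigcup_{N,k}C'_{k,N,N_1(N,\e)}$ (open and dense in $\{0,e\}^{\mathbb{T}}$ resp.\ $\{0,e\}^{\Z^2}$) and $G:=\bigcap_n O_{\e_n}$ for a sequence $\e_n\searrow 0$ yields a dense $G_\delta$ at every point of which hypothesis (3) of Lemma \ref{lemma3} is satisfied; recurrence of the billiard flow then follows.

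The main obstacle really only lies in part (a): one must verify that $D_N$ can be chosen simple, transversal to the obstacle boundaries and genuinely separating bounded from unbounded components of the table for all sufficiently large $N$. This is routine in the triangular-lattice example; in the $\Z^2$ five-ball cluster example one has to scale $N$ by the cluster size so that the modifying arcs at successive clusters do not interfere, after which the construction proceeds identically. Once the geometry of $D_N$ is settled, everything else is a direct transcription of the wind-tree proof, which is why the author is content to omit it.
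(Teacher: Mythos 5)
Your proposal is correct and is exactly the argument the paper intends: the paper explicitly omits the proof as ``essentially identical to the proof of Theorem 2,'' and your transfer --- recurrence of the full-occupancy finite-horizon table $L_e$ via Conze--Schmidt in place of $W_e$, the same separating curves $D_N$ and first-return maps, the same Lemma \ref{lemma3}, and the same cylinder-set construction of $O_\e$ and $G=\bigcap_n O_{\e_n}$ --- is precisely that transcription. Your extra care about the geometry of $D_N$ near obstacle clusters is a reasonable (and correctly resolved) point of detail that the paper glosses over.
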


All the tables in the dense $G_{\delta}$ will have a locally finite horizon.
It would be interesting to investigate if this implies ergodicity like in the finite horizon case (see \cite{Le1},\cite{Le2}).

\bigskip

Acknowledgements:  Many thanks to Marco Lenci for useful comments.

\end{document}